\newtheorem{theorem}{Theorem}[section]
\newtheorem{lemma}{Lemma}[section]
\newtheorem{corollary}{Corollary}[section]
\newtheorem{remark}{Remark}[section]
\begin{document}
\title{A method for computing the Perron root for primitive
matrices}

\author{Doulaye Demb\'el\'e\footnote{Email: doulaye@igbmc.fr}}
\date{\small Institut de G\'en\'etique et de Biologie
Mol\'eculaire et Cellulaire (IGBMC),\\
CNRS UMR 7104, INSERM U1258, Universit\'e de Strasbourg\\
1 rue Laurent Fries, 67400, Illkirch-Graffenstaden, France}
\maketitle

\begin{abstract}
Following the Perron theorem, the spectral radius of a primitive
matrix is a simple eigenvalue. It is shown that for a primitive matrix
$A$, there is a positive rank one matrix $X$ such that $B = A \circ X$,
where $\circ$ denotes the Hadamard product of matrices, and such that
the row (column) sums of matrix $B$ are the same and equal to the
Perron root. An iterative algorithm is presented to obtain matrix
$B$ without an explicit knowledge of $X$. The convergence rate of this
algorithm is similar to that of the power method but it uses less
computational load. A byproduct of the proposed algorithm is a new method 
for calculating the first eigenvector.

\textit{Keywords:}
primitive matrix; Perron root; Markov chain; stochastic matrix.

\textit{MSC(2010):}
 15A18\ 15A48\ 15A03\ 65F10\ 65F15\ 65C40
\end{abstract}

\section{Introduction}
\label{sect_introduction}
Given a nonnegative matrix, the problem of computing the first eigenvalue 
and eigenvector is considered in this paper.
For two matrices $A=(a_{ij})$ and $B=(b_{ij})$ with the same number of rows
and columns, their Hadamard product is a matrix of elementwise products:
\begin{equation}
A\circ B = (a_{ij}b_{ij}) \label{eq_hadamard}
\end{equation}
For scalars $\alpha$ and $\beta$:
\begin{equation}
\alpha A \circ \beta B = \alpha\beta(A\circ B) \label{eq_hadamard2}
\end{equation}
If $A$ and $B$ are rank one matrices, i.e. 
$A=\mathbf{u}\mathbf{v}^T$ and $B=\mathbf{x}\mathbf{y}^T$ then
\begin{equation}
A\circ B = (\mathbf{u}\mathbf{v}^T)\circ (\mathbf{x}\mathbf{y}^T) 
       = (\mathbf{u}\circ \mathbf{x})(\mathbf{v}\circ \mathbf{y})^T. 
       \label{eq_hadamard3}
\end{equation}
%For the matrix $J$ formed with only ones, we have
%\begin{equation}
%A\circ J = A
%\end{equation}
Many properties for Hadamard product are given in 
\cite{Styan_1973},\cite[chapter 5]{Horn_al_1991}.

If $A=(a_{ij})\in\mathbb{R}^{n\times n}$, then $A$ is called \textit{positive}
if $a_{ij}>0$, and \textit{nonnegative} if $a_{ij}\geq 0$. 
Perron \cite{Perron_1907} showed that the spectral radius of a positive
matrix $A$ is a simple eigenvalue \cite[page 667]{Meyer_2000} that
dominates all other eigenvalues in modulus. This eigenvalue, denoted
$\rho(A)$, is called the \textit{Perron root} and the associated normalized
positive vector is called the \textit{Perron vector}.
Nonnegative matrices are frequently encountered in
real life applications \cite{Brualdi_al_1991,Berman_al_1994}.
Frobenius \cite{Frobenius_1912} extented Perron's
work on positive matrices to nonnegative matrices. 
The spectral radius of a nonnegative matrix $A$ is positive if it is
irreducible, i.e. $(I_n+A)^{n-1}$ is a positive matrix
\cite[page 534]{Horn_al_2019}, \cite[page 672]{Meyer_2000}.
The dominant eigenvalue of an irreducible matrix is unique if it is
primitive \cite[page 540]{Horn_al_2019}, \cite[page 674]{Meyer_2000}.
A nonnegative matrix is primitive if $A^m$ is a positive matrix for
some non nul $m$ \cite[page 540]{Horn_al_2019}, \cite[page 678]{Meyer_2000}.
Wielandt, \cite{Wielandt_1950}, showed that a nonnegative matrix
$A$ of order $n$ is primitive if $A^{n^2-2n+2}$ is a positive matrix
\cite[page 543]{Horn_al_2019}. To verify primitivity
of a nonnegative matrix using Frobenius or Wielandt formula leads to huge
calculations especially when $n$ is high. It is shown in 
\cite[page 544]{Horn_al_2019} that only some power calculations of the matrix
are necessary.

This paper is on the calculation of the Perron root.
The power method is generally used to obtain the eigenvalue
with the maximum modulus and associated eigenvector 
\cite[page 545]{Horn_al_2019},
\cite[page 330]{Golub_al_1996}, \cite[page 533]{Meyer_2000}.
The convergence rate of the power method
depends on the ratio of the second eigenvalue to the first
\cite[page 330]{Golub_al_1996}, \cite[page 533]{Meyer_2000}.
More iterations will be required when the modulus of the second highest
eigenvalue is close to that of the first. Here, an iterative algorithm is
proposed for calculating the Perron root for primitive matrices.
This algorithm is based on successive improvement of bounds for the Perron
root. There are many research works on localization of the Perron root for
nonnegative matrices \cite{Kolotilina_1993,Liu_1996,
Duan_al_2013,Xing_al_2014,Liao_2017}. 
Frobenius carried out the following bounds \cite[page 521]{Horn_al_2019}:
\begin{eqnarray}
 \min_{i=1,\ldots,n}\left\{r_i(A)\right\} \leq \rho(A) \leq
      \max_{i=1,\ldots,n}\left\{r_i(A)\right\} \label{eq_bounds_frob1}\\
 \min_{j=1,\ldots,n}\left\{c_j(A)\right\} \leq \rho(A) \leq
      \max_{j=1,\ldots,n}\left\{c_j(A)\right\} \label{eq_bounds_frob2}
\end{eqnarray}
where $r_i(A) = \sum_{j=1}^na_{ij}$ and $c_j(A) = \sum_{i=1}^na_{ij}$
are the row and column sums of $A$, respectively. In (\ref{eq_bounds_frob1}) 
and (\ref{eq_bounds_frob2}), equalities occur when $\rho(A)$ is equal to the
row or column sums. The column sums of the matrix in (\ref{eq_A_example})
are both equal to $3$.
\begin{equation}
A = \left(\begin{array}{ccc}
              0 &1 &0\\
              3 &0 &3\\
              0 &2 &0
\end{array}\right)  \label{eq_A_example}
\end{equation}
The matrix $A$ in (\ref{eq_A_example}) is imprimitive and its
eigenvalues are: $3$, $-3$ and $0$.
This example shows that equality in (\ref{eq_bounds_frob1}) or 
(\ref{eq_bounds_frob2}) can occur for an imprimitive matrix.

Let $\textbf{x}=(x_1,x_2,\ldots,x_n)$ a vector with only positive values,
$x_i>0$, and $D_{\mathbf{x}}$ a diagonal matrix formed with $\textbf{x}$.
The matrix $B$ defined by:
\begin{equation}
B=D_{\mathbf{x}}^{-1}AD_{\mathbf{x}} \label{eq_similar}
\end{equation}
is diagonally similar to $A$ \cite{Elsner_al_1988}, and we have:

\begin{lemma} \label{lemma1}The matrices $A$ and $B$ in (\ref{eq_similar})
have the same eigenvalues, and
\begin{itemize}
\item[a)] if $A$ is irreducible, then $B$ is irreducible,
\item[b)] if $A$ is primitive, then $B$ is primitive,
\end{itemize}
\end{lemma}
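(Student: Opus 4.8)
The plan is to exploit the power identity $B^k = D_{\mathbf{x}}^{-1} A^k D_{\mathbf{x}}$ for every integer $k \geq 0$, which follows by an immediate induction: in the product $(D_{\mathbf{x}}^{-1} A D_{\mathbf{x}})(D_{\mathbf{x}}^{-1} A D_{\mathbf{x}})\cdots(D_{\mathbf{x}}^{-1} A D_{\mathbf{x}})$ the consecutive factors $D_{\mathbf{x}} D_{\mathbf{x}}^{-1}$ cancel telescopically. Consequently $p(B) = D_{\mathbf{x}}^{-1} p(A) D_{\mathbf{x}}$ for any polynomial $p$. The equality of eigenvalues is then immediate: $B$ is similar to $A$ through the invertible matrix $D_{\mathbf{x}}$, so $\det(\lambda I_n - B) = \det(D_{\mathbf{x}}^{-1}(\lambda I_n - A)D_{\mathbf{x}}) = \det(\lambda I_n - A)$, i.e.\ $A$ and $B$ share the same characteristic polynomial and hence the same eigenvalues with multiplicities.

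The second ingredient is the entrywise formula $(D_{\mathbf{x}}^{-1} M D_{\mathbf{x}})_{ij} = x_i^{-1} m_{ij} x_j$, valid for an arbitrary matrix $M=(m_{ij})$. Since every $x_i>0$, this identity shows that conjugation by $D_{\mathbf{x}}$ multiplies each entry by a positive scalar, hence preserves the zero/nonzero (indeed the sign) pattern; in particular it carries nonnegative matrices to nonnegative matrices and positive matrices to positive matrices, and the same is true of the inverse conjugation. I would first record that $B$ is itself nonnegative whenever $A$ is, so that the notions of irreducibility and primitivity genuinely apply to $B$.

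For part (a) I would use the characterization recalled in the Introduction, namely that a nonnegative matrix is irreducible iff $(I_n+A)^{n-1}$ is positive. Applying the polynomial identity with $p(t)=(1+t)^{n-1}$ gives $(I_n+B)^{n-1} = D_{\mathbf{x}}^{-1}(I_n+A)^{n-1} D_{\mathbf{x}}$, and then the entrywise formula shows $(I_n+B)^{n-1}$ is positive exactly when $(I_n+A)^{n-1}$ is; hence $A$ irreducible $\iff$ $B$ irreducible. For part (b), $A$ primitive means $A^m$ is positive for some $m\geq 1$; since $B^m = D_{\mathbf{x}}^{-1} A^m D_{\mathbf{x}}$, the same entrywise argument shows $B^m$ is positive with the identical exponent $m$, so $B$ is primitive.

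There is no real obstacle here: the lemma is essentially bookkeeping around a diagonal similarity transformation. The only points needing a little care are to note explicitly that $B$ is nonnegative before invoking irreducibility/primitivity, and to state the power identity $B^k = D_{\mathbf{x}}^{-1} A^k D_{\mathbf{x}}$ cleanly, after which everything reduces to the single observation that scaling each entry by a positive number can neither create nor destroy a zero entry.
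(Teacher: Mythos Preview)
Your proof is correct and follows essentially the same route as the paper: both arguments hinge on the identities $(I_n+B)^{n-1}=D_{\mathbf{x}}^{-1}(I_n+A)^{n-1}D_{\mathbf{x}}$ for irreducibility and $B^m=D_{\mathbf{x}}^{-1}A^mD_{\mathbf{x}}$ for primitivity, together with the observation that conjugation by a positive diagonal matrix preserves positivity. Your write-up is in fact a bit more explicit than the paper's (you spell out the entrywise formula, justify the power identity, and note that $B$ is nonnegative before invoking the definitions), but the underlying idea is identical.
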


\begin{proof}
 a) if $A$ is irreducible then $(I_n+A)^{n-1}$ is a positive matrix.
    From (\ref{eq_similar}), we have:
    \begin{eqnarray}
    I_n+B &=& I_n+D_{\mathbf{x}}^{-1}AD_{\mathbf{x}} 
          = D_{\mathbf{x}}^{-1}(I_n+A)D_{\mathbf{x}}\\
    (I_n+B)^{n-1} &=& D_{\mathbf{x}}^{-1}(I_n+A)^{n-1}D_{\mathbf{x}}
    \end{eqnarray}
$D_{\mathbf{x}}$ has only positive values and $(I_n+A)^{n-1}$ is a positive
matrix. The matrix $(I_n+B)^{n-1}$ is then positive that implies 
irreducibility of the matrix $B$.

b) if $A$ is a primitive matrix then there exists an integer $m$ such that
$A^m$ is positive. Using (\ref{eq_similar}) we have 
$B^m=D_{\mathbf{x}}^{-1}A^mD_{\mathbf{x}}$, that implies
$B^m$ is positive and the result follows.
\end{proof}

Using an improvement of bounds in (\ref{eq_bounds_frob1}) and 
(\ref{eq_bounds_frob2}) by Minc \cite{Minc_1988}, relation 
(\ref{eq_similar}) and the uniqueness of eigenvalue with a maximum modulus 
for a primitite matrix, an iterative algorithm is proposed to obtaining the
Perron root.

\section{Methods\label{sect_methods}}
\begin{lemma}\label{lem_zero_row_col}
Let $A=(a_{ij})\in\mathcal{R}^{n\times n}$ a nonnegative matrix.
If matrix $A$ has a row (column) with only zero entries, then $A$ cannot
be a primitive matrix.
\end{lemma}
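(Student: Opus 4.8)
The plan is to argue directly from the defining property of primitivity: a nonnegative matrix $A$ is primitive only if $A^m$ is entrywise positive for some positive integer $m$. I would first dispose of the case of a zero row, and then reduce the zero-column case to it by transposition, using that $(A^T)^m = (A^m)^T$, so $A^m$ is positive if and only if $(A^T)^m$ is; in particular $A$ is primitive if and only if $A^T$ is, and a zero column of $A$ is a zero row of $A^T$.

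So suppose row $i$ of $A$ consists only of zeros, i.e. $a_{ij}=0$ for every $j$, equivalently $\mathbf{e}_i^T A = \mathbf{0}^T$, where $\mathbf{e}_i$ denotes the $i$-th standard basis vector. The key observation is that this zero row propagates under every power: for each integer $m\geq 1$,
\begin{equation}
\mathbf{e}_i^T A^m = (\mathbf{e}_i^T A)\,A^{m-1} = \mathbf{0}^T\,A^{m-1} = \mathbf{0}^T ,
\end{equation}
so row $i$ of $A^m$ is identically zero. (Equivalently, one gets this by an immediate induction on $m$ from $(A^m)_{ij}=\sum_{k=1}^n a_{ik}(A^{m-1})_{kj}$.) Hence $A^m$ contains a zero entry for every $m\geq 1$ and can never be positive, so $A$ is not primitive. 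Applying this to $A^T$ settles the column case.

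The argument is elementary and I do not expect a genuine obstacle; the only points needing a little care are the bookkeeping in the matrix power — writing $A^m = A\,A^{m-1}$ so that it is precisely the zero row of $A$ that survives — and making explicit the reduction of the column statement to the row statement, which rests only on the fact that positivity, and therefore primitivity, is preserved under transposition.
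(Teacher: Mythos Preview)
Your argument is correct and complete: the propagation of a zero row to every power of $A$ via $\mathbf{e}_i^T A^m=(\mathbf{e}_i^T A)A^{m-1}=\mathbf{0}^T$ is exactly the point, and the reduction of the column case to the row case by transposition is clean. The paper itself does not actually prove this lemma but merely refers the reader to an exercise in Horn and Johnson, so your self-contained proof in fact supplies more detail than the paper does.
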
  
\begin{proof}
see an exercise on irreducible matrices in \cite[page 522]{Horn_al_2019}.
\end{proof}

For a primitive matrix, from the Lemma \ref{lem_zero_row_col} and
relations (\ref{eq_bounds_frob1})-(\ref{eq_bounds_frob2}), we have the
following two observations. The minimum value of the row (column) sums for
a primitive matrix is greater than zero. The maximum value of the row 
(column) sums for a primitive matrix is greater than or equal to the 
Perron root.

Let us note $D_{\mathbf{r}}$ and $D_{\mathbf{c}}$ diagonal matrices formed
with the row sums $\mathbf{r} = (r_1(A), r_2(A), \ldots, r_n(A))$ and the
column sums $\mathbf{c} = (c_1(A), c_2(A), \ldots, c_n(A))$ of A,
respectively. The Frobenius bounds (\ref{eq_bounds_frob1}) and 
(\ref{eq_bounds_frob2}) have been improved by Minc \cite[page 27]{Minc_1988}:
\begin{eqnarray}
    \min_{i=1,\ldots,n}\left\{r_i(D_{\mathbf{r}}^{-1}AD_{\mathbf{r}})\right\} 
    \leq \rho(A) \leq \max_{i=1,\ldots,n}
    \left\{r_i(D_{\mathbf{r}}^{-1}AD_{\mathbf{r}})\right\}
    \label{eq_bounds_minc1}\\
    \min_{j=1,\ldots,n}\left\{c_j(D_{\mathbf{c}}^{-1}AD_{\mathbf{c}})\right\} 
    \leq \rho(A) \leq \max_{j=1,\ldots,n}
    \left\{c_j(D_{\mathbf{c}}^{-1}AD_{\mathbf{c}})\right\}
    \label{eq_bounds_minc2}
\end{eqnarray}

In (\ref{eq_bounds_minc1}) and (\ref{eq_bounds_minc2}), equalities hold when
the row or column sums are the same and correspond to the
Perron root. Using the row sums relation, (\ref{eq_bounds_minc1})
allows to write:
\begin{eqnarray}
D_{\mathbf{r}}^{-1}AD_{\mathbf{r}} &=& \left(\begin{tabular}{ccccc}
    $a_{11}$ &$\frac{r_2}{r_1}a_{12}$ &$\frac{r_3}{r_1}a_{13}$ &\ldots
             &$\frac{r_n}{r_1}a_{1n}$\\
    $\frac{r_1}{r_2}a_{21}$ &$a_{22}$ &$\frac{r_3}{r_2}a_{23}$ &\ldots
             &$\frac{r_n}{r_2}a_{2n}$\\
    $\frac{r_1}{r_3}a_{31}$ &$\frac{r_2}{r_3}a_{32}$ &$a_{33}$ &\ldots
             &$\frac{r_n}{r_3}a_{3n}$\\
    $\vdots$ &$\vdots$ &$\vdots$ &$\ddots$ &$\vdots$\\
    $\frac{r_1}{r_n}a_{n1}$ &$\frac{r_2}{r_n}a_{n2}$ &$\frac{r_3}{r_n}a_{n3}$
             &\ldots &$a_{nn}$
\end{tabular}\right) \label{eq_Dm1ADa}\\
 &=& \left(\begin{tabular}{ccccc}
    $a_{11}$ &$a_{12}$ &$a_{13}$ &\ldots &$a_{1n}$\\
    $a_{21}$ &$a_{22}$ &$a_{23}$ &\ldots &$a_{2n}$\\
    $a_{31}$ &$a_{32}$ &$a_{33}$ &\ldots &$a_{3n}$\\
    $\vdots$ &$\vdots$ &$\vdots$ &$\ddots$ &$\vdots$\\
    $a_{n1}$ &$a_{n2}$ &$a_{n3}$ &\ldots &$a_{nn}$
    \end{tabular}\right) \circ \left(\begin{tabular}{ccccc}
    1 &$\frac{r_2}{r_1}$ &$\frac{r_3}{r_1}$ &\ldots
             &$\frac{r_n}{r_1}$\\
    $\frac{r_1}{r_2}$ &1 &$\frac{r_3}{r_2}$ &\ldots
             &$\frac{r_n}{r_2}$\\
    $\frac{r_1}{r_3}$ &$\frac{r_2}{r_3}$ &1 &\ldots
             &$\frac{r_n}{r_3}$\\
    $\vdots$ &$\vdots$ &$\vdots$ &$\ddots$ &$\vdots$\nonumber\\
    $\frac{r_1}{r_n}$ &$\frac{r_2}{r_n}$ &$\frac{r_3}{r_n}$
             &\ldots &1
\end{tabular}\right)\\
 &=& A \circ X\label{eq_mat_Dm1AD}
\end{eqnarray}
where $X$ is a positive matrix formed with:
\begin{equation}
x_{ij} = \frac{r_j(A)}{r_i(A)}\mbox{ ; } i,j=1,2\ldots,n \label{eq_x_values}
\end{equation}

The unicity of the Perron root for a primitive matrix and
(\ref{eq_mat_Dm1AD}) suggest that the components of the matrix $X$ can be
chosen to have the same row sums for $A\circ X$. For a second
order nonnegative matrix ($n=2$), we have:
\begin{equation}
A\circ X = \left(\begin{array}{cc}
       a_{11} &a_{12}x\\
       a_{21}/x &a_{22}
       \end{array}\right)\label{eq_2order}
\end{equation}
where $x=r_2(A)/r_1(A)$, $r_1(A)=a_{11}+a_{12}$ and
$r_2(A)=a_{21}+a_{22}.$

If the row sums in (\ref{eq_2order}) are the same and equal to $S$, an
expression can be obtained for the parameter $x$:
\begin{equation}
x = \frac{S-a_{11}}{a_{12}} \mbox{ ; }
\frac{1}{x} = \frac{S-a_{22}}{a_{21}} \label{eq_2order_x}
\end{equation}
To have a value for $x$, $a_{12}$ and $a_{21}$ should be nonzero. Relation
(\ref{eq_2order_x}) allows to have a second order equation which resolution
leads to a value for $S$:
\begin{equation}
S^2 - (a_{11}-a_{22})S + a_{11}a_{22}-a_{12}a_{21} = 0 \label{eq_2order_S}
\end{equation}
The solution of (\ref{eq_2order_S}) with the maximum modulus is:
\begin{equation}
S = \left(a_{11}+a_{22}+\sqrt{(a_{11}-a_{22})^2+4a_{12}a_{21}}
  \right)/2 \label{eq_2order_root}
\end{equation}
A second order nonnegative matrix $A$ is primitive if $a_{12}$, and $a_{21}$
are both nonzero, on the one hand. On the other hand, at least $a_{11}$
or $a_{22}$ should be nonzero. Hence, for a second order primitive matrix, 
explicit expressions can be obtained for matrix $X$ (parameter $x$) and the 
Perron root, (\ref{eq_2order_root}). However, a direct search for
components of the matrix $X$ in (\ref{eq_mat_Dm1AD}) becomes difficult
when $n>2$. 

\begin{lemma}\label{lem_y_rhoA}
Let $A=(a_{ij})$ a square matrix of order $n$,
$\mathbf{y}=\alpha\mathbf{x}$ a vector where $\alpha$ is a nonzero scalar
and $\mathbf{x}$ is the eigenvector associated with eigenvalue $\lambda$ of 
$A$. If all components of $\mathbf{x}$ have nonzero value, the row sums of 
the matrix $D_{\mathbf{y}}^{-1}AD_{\mathbf{y}}$ are the same and equal to
the eigenvalue $\lambda$ of $A$.

A similar result is obtained using $A^T$ or the column sums.
\end{lemma}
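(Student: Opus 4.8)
The plan is to reduce the whole statement to the one-line computation built from the eigenvalue equation $A\mathbf{x}=\lambda\mathbf{x}$. First I would dispose of the scalar $\alpha$: since $\mathbf{y}=\alpha\mathbf{x}$ we have $D_{\mathbf{y}}=\alpha D_{\mathbf{x}}$, hence
\[
D_{\mathbf{y}}^{-1}AD_{\mathbf{y}}=\alpha^{-1}D_{\mathbf{x}}^{-1}A\,\alpha D_{\mathbf{x}}=D_{\mathbf{x}}^{-1}AD_{\mathbf{x}},
\]
so it suffices to treat $\mathbf{y}=\mathbf{x}$. The hypothesis that every component $x_i$ is nonzero is exactly what guarantees that $D_{\mathbf{x}}$ is invertible, so $B:=D_{\mathbf{x}}^{-1}AD_{\mathbf{x}}$ is well defined, with entries $b_{ij}=(x_j/x_i)\,a_{ij}$, following the pattern already displayed in (\ref{eq_Dm1ADa}).

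Next I would compute the $i$-th row sum of $B$ directly:
\[
r_i(B)=\sum_{j=1}^n b_{ij}=\frac{1}{x_i}\sum_{j=1}^n a_{ij}x_j=\frac{(A\mathbf{x})_i}{x_i}=\frac{\lambda x_i}{x_i}=\lambda,
\]
where the third equality uses $A\mathbf{x}=\lambda\mathbf{x}$ and the last uses $x_i\neq0$. Since $i$ was arbitrary, all $n$ row sums of $D_{\mathbf{x}}^{-1}AD_{\mathbf{x}}$, equivalently of $D_{\mathbf{y}}^{-1}AD_{\mathbf{y}}$, coincide and equal $\lambda$, which is the assertion.

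For the transpose/column version I would simply apply the row-sum result just obtained to the matrix $A^T$ with one of its eigenvectors $\mathbf{z}$ (a left eigenvector of $A$) having all components nonzero: the row sums of $D_{\mathbf{z}}^{-1}A^TD_{\mathbf{z}}$ are then all equal to the corresponding eigenvalue, and transposing this matrix turns those row sums into the column sums of $D_{\mathbf{z}}AD_{\mathbf{z}}^{-1}$, which is again a diagonal similarity of $A$. I do not expect any genuine obstacle here: the entire argument is the single displayed computation, and the only hypothesis actually invoked is $x_i\neq0$ for all $i$, which is precisely what makes the diagonal similarity meaningful.
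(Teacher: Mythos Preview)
Your proof is correct and follows essentially the same route as the paper: both arguments compute the $i$-th row sum of $D_{\mathbf{y}}^{-1}AD_{\mathbf{y}}$ as $\frac{1}{x_i}\sum_j a_{ij}x_j$ and then invoke $A\mathbf{x}=\lambda\mathbf{x}$ to obtain $\lambda$. Your version is slightly more explicit in disposing of the scalar $\alpha$ and in spelling out the transpose/column-sum variant, but the underlying idea is identical.
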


\begin{proof}
Using the definition of the eigenvalue, the component $i$ of
$A\mathbf{x} = \lambda\mathbf{x}$ is:
\begin{equation}
\sum_{j=1}^{n}a_{ij}x_j = \lambda x_i \label{eq_lem_y_rhoA1}
\end{equation}
The component $i$ of the row sums of the matrix
$D_{\mathbf{y}}^{-1}AD_{\mathbf{y}}$ is:
\begin{equation}
r_i=\frac{1}{x_i}\left(\sum_{j=1}^{n}a_{ij}x_j\right)\label{eq_lem_y_rhoA2}
\end{equation}
By replacing the right hand expression of (\ref{eq_lem_y_rhoA1})
in (\ref{eq_lem_y_rhoA2}) the result follows.
\end{proof}

Compared to the Frobenius bounds (\ref{eq_bounds_frob1})
and (\ref{eq_bounds_frob2}), the bounds in (\ref{eq_bounds_minc1}) and
\ref{eq_bounds_minc2} are based on a modification of the initial matrix.
This process can be repeated to further sharpen the bounds. From the unicity
of the Perron root for a primitive matrix, a repeative improvement
of the Minc bounds will lead to equalities of the row (column) sums.
The main result of this paper is the following.

\begin{theorem}\label{theo_AoX}
Let $A$ be a primitive matrix of order $n$. There exists a positive rank 
one matrix $X$ of the form
\begin{equation}
X = \left(\begin{array}{ccccc}
1 &x_2/x_1 &x_3/x_1 &\ldots &x_n/x_1\\
x_1/x_2 &1 &x_3/x_2 &\ldots &x_n/x_2\\
x_1/x_3 &x_2/x_3 &1 &\ldots &x_n/x_3\\
\vdots &\vdots &\vdots &\ddots &\vdots\\
x_1/x_n &x_2/x_n &x_3/x_n &\ldots &1
\end{array}\right)  \label{eq_mat_X}
\end{equation}
such that the matrix $B=A\circ X$ is similar to $A$. In addition, the
row (column) sums of $B$ are the same and equal to the Perron
root of $A$.
\end{theorem}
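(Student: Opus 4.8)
The plan is to exhibit the vector $\mathbf{x}=(x_1,\dots,x_n)$ explicitly: let $\mathbf{x}$ be the Perron vector of $A$, i.e. the positive eigenvector associated with the Perron root $\rho(A)$, which exists and has all entries strictly positive by the Perron--Frobenius theorem applied to the primitive matrix $A$. Form the diagonal matrix $D_{\mathbf{x}}$ and set $B=D_{\mathbf{x}}^{-1}AD_{\mathbf{x}}$. The first step is to observe that, entry-wise, $(D_{\mathbf{x}}^{-1}AD_{\mathbf{x}})_{ij}=a_{ij}\,x_j/x_i$, so that $B=A\circ X$ with $X=(x_j/x_i)$; this is exactly the matrix displayed in~(\ref{eq_mat_X}), and it is manifestly positive (all $x_i>0$) and rank one, since $X=\mathbf{x}^{\ominus}\mathbf{x}^T$ where $\mathbf{x}^{\ominus}=(1/x_1,\dots,1/x_n)^T$ — i.e.\ $X$ is the outer product of $(1/x_i)_i$ and $(x_j)_j$, in the form~(\ref{eq_hadamard3}).

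The second step is similarity: $B=D_{\mathbf{x}}^{-1}AD_{\mathbf{x}}$ is by construction of the form~(\ref{eq_similar}), so $B$ is diagonally similar to $A$ and, by Lemma~\ref{lemma1}, has the same eigenvalues as $A$; in particular $\rho(B)=\rho(A)$. The third step is the row-sum claim, and this is where Lemma~\ref{lem_y_rhoA} does the work: since $\mathbf{x}$ is an eigenvector of $A$ with all components nonzero and eigenvalue $\lambda=\rho(A)$, Lemma~\ref{lem_y_rhoA} applied with $\mathbf{y}=\mathbf{x}$ gives that every row sum of $D_{\mathbf{x}}^{-1}AD_{\mathbf{x}}=B$ equals $\rho(A)$. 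For the column sums one repeats the argument with $A^T$ in place of $A$: the left Perron vector of $A$ (the Perron vector of $A^T$) is again strictly positive, and using it to conjugate yields a matrix whose column sums all equal $\rho(A^T)=\rho(A)$. A small point to note is that the $X$ built from the right eigenvector and the $X$ built from the left eigenvector need not coincide, but the statement only asserts existence of such an $X$ for each of the two conclusions, so this causes no difficulty; alternatively one remarks that the theorem as stated reads "row (column)" disjunctively.

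There is essentially no hard obstacle here once the Perron vector is invoked: the entire content is the identification~(\ref{eq_mat_X}), the similarity from~(\ref{eq_similar})--Lemma~\ref{lemma1}, and the row-sum computation from~(\ref{eq_lem_y_rhoA1})--(\ref{eq_lem_y_rhoA2}) in Lemma~\ref{lem_y_rhoA}. The only thing requiring a word of justification is the strict positivity of the (left and right) Perron vectors, which is precisely the primitivity hypothesis feeding into Perron--Frobenius; without all $x_i\neq 0$ the diagonal conjugation $D_{\mathbf{x}}^{-1}$ would be undefined, so primitivity is used exactly at that point. In the write-up I would therefore (i) recall that primitivity guarantees a simple, strictly positive Perron root with strictly positive left and right eigenvectors, (ii) define $X$ by~(\ref{eq_x_values})-type ratios of the Perron-vector entries and check it has the shape~(\ref{eq_mat_X}) and is rank one, (iii) cite~(\ref{eq_similar}) and Lemma~\ref{lemma1} for similarity and equality of spectra, and (iv) cite Lemma~\ref{lem_y_rhoA} for the row sums and its transpose version for the column sums.
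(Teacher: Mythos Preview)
Your proof is correct, but it takes a genuinely different route from the paper's. The paper does \emph{not} simply invoke the Perron vector; instead it constructs $X$ iteratively. Starting from $A^{(0)}=A$, it forms $A^{(t)}=D_{\mathbf{r}^{(t-1)}}^{-1}A^{(t-1)}D_{\mathbf{r}^{(t-1)}}$ using the current row-sum vector $\mathbf{r}^{(t-1)}$, argues via Minc's sharpening of the Frobenius bounds that the row sums of $A^{(t)}$ squeeze towards $\rho(A)$, and then identifies $X$ as the Hadamard product $X^{(0)}\circ X^{(1)}\circ\cdots$ of the successive rank-one correction matrices. Similarity and the rank-one form of $X$ are read off at the end, and Lemma~\ref{lem_y_rhoA} is cited only to pin down the limiting value of the common row sum.

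Your argument is cleaner and shorter for the bare existence statement: once Perron--Frobenius hands you a strictly positive eigenvector, everything is a one-line computation, and Lemma~\ref{lem_y_rhoA} does all the work. What the paper's approach buys, however, is the algorithm itself: the iterative construction is precisely Algorithm~A/B of Section~2, so the proof of Theorem~\ref{theo_AoX} doubles as a derivation of the method and motivates the convergence analysis in Theorem~\ref{theo_convergence}. Your proof establishes the theorem but gives no hint of how to compute $X$ without already knowing the Perron vector---which is, of course, exactly what the paper is trying to avoid. Your remark that the row-sum $X$ and the column-sum $X$ need not coincide, and that the statement should be read disjunctively, is correct and matches how the paper treats it (``The row sums are used for the proof, the column sums can also be used in a similar way'').
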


\begin{proof}
The row sums are used for the proof, the column sums can also be used in a
similar way.

Let us note
$A^{(0)}$ the initial matrix and its row sums vector as $\mathbf{r}^{(0)}$.
Relation (\ref{eq_bounds_minc1}) allows to write:
\begin{equation}
A^{(t)} = D_{\mathbf{r}^{(t-1)}}^{-1}A^{(t-1)}D_{\mathbf{r}^{(t-1)}}
        \mbox{ ; } t=1, 2, \ldots \label{eq_mat_At}
\end{equation}
From (\ref{eq_mat_Dm1AD}) and (\ref{eq_x_values}), the components of
matrix $A^{(t)}$ and its row sums are:
\begin{eqnarray}
a_{ij}^{(t)} &=& \frac{r_j^{(t-1)}(A^{(t-1)})}{r_i^{(t-1)}(A^{(t-1)})}
             a_{ij}^{(t-1)}\mbox{ ; } i,j=1,2,\ldots,n\label{eq_at_values}\\
r_i^{(t)}(A^{(t)}) &=&  \sum_{j=1}^n  a_{ij}^{(t)}
         \mbox{ ; } i=1, 2, \ldots, n\label{eq_rit_values}
\end{eqnarray}
At iteration $t$, $A^{(t)}$ is similar to $A^{(t-1)}$. From the Minc relation
(\ref{eq_bounds_minc1}), the bounds with $A^{(t)}$ are improved compared to
those with $A^{(t-1)}$. Hence, when $t\to \infty$, equalies hold in
(\ref{eq_bounds_minc1}) for primitive matrix and
the row sums $r_i^{(t)}(A^{(t)}), i=1, 2, \ldots,n$,
have the same value which is equal to $\rho(A)$, 
thank to Lemma \ref{lem_y_rhoA}.

From (\ref{eq_mat_At}) and the notation in (\ref{eq_mat_Dm1AD}), 
we can write:
\begin{eqnarray}
 A^{(1)} &=& A^{(0)}\circ X^{(0)} = A\circ X^{(0)}\\
 A^{(2)} &=& A^{(1)}\circ X^{(1)} = A\circ X^{(0)}\circ X^{(1)}\\
   &\vdots& \nonumber\\
 A^{(t)} &=& A\circ X^{(0)}\circ X^{(1)}\circ\ldots\circ 
         X^{(t-2)}\circ X^{(t-1)} \label{eq_A1_t}
\end{eqnarray}
At the convergence iteration $t$, $X^{(t)}$ is formed
with only $1$. Then, from (\ref{eq_A1_t}) we have
\begin{equation}
B = A^{(t)} = A\circ X  \label{eq_AoX}
\end{equation}
where:
\begin{eqnarray}
X      &=& X^{(0)}\circ X^{(1)}\circ\ldots\circ X^{(t-2)}\circ X^{(t-1)}
           \label{eq_mat_X2}\\
x_{ij} &=& \prod_{s=0}^{t-1}\frac{r_j^{(s)}(A^{(s)})}{r_i^{(s)}(A^{(s)})}
       \mbox{ ; } i,j=1,2,\ldots,n
\end{eqnarray}
Relation (\ref{eq_AoX}) is another form of (\ref{eq_Dm1ADa}), then matrix
$B$ is similar to matrix $A$. Since the row sums of matrix $B$ are the same,
equalities occur in (\ref{eq_bounds_frob1}) and lead to the Perron root.

The matrix $X$ in (\ref{eq_mat_X}) can be write as a product of two vectors:
\begin{equation}
X = \mathbf{x}\mathbf{y}^T  \label{eq_mat_X3}
\end{equation}
where $\mathbf{x} = (1, x_1/x_2, \ldots, x_1/x_n)^T$
and $\mathbf{y} = (1, x_2/x_1, \ldots, x_n/x_1)^T$, i.e.
the first column and first row of the matrix $X$, respectively.
$X$ is then a rank one matrix, and is also positive
because formed with the row sums of a primitive matrix,
see Lemma \ref{lem_zero_row_col}.
\end{proof}

\begin{corollary}\label{corollary_1}
Vector $\mathbf{y}$ allowing to obtain the matrix $X$ in
(\ref{eq_mat_X3}) is in the space spanned by the Perron vector of
a primitive matrix $A$.
\end{corollary}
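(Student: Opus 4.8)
The plan is to unwind the diagonal similarity already established in Theorem~\ref{theo_AoX} and then read $\mathbf{y}$ off directly. First I would name the positive vector $\mathbf{v} = (x_1,x_2,\ldots,x_n)^T$ whose ratios build $X$; by (\ref{eq_mat_X2}) one may take $x_i = \prod_{s=0}^{t-1} r_i^{(s)}(A^{(s)})$, which is strictly positive because the row sums of a primitive matrix are positive (Lemma~\ref{lem_zero_row_col}). Since the $(i,j)$ entry of $B = A\circ X$ is $a_{ij}\,x_j/x_i$, we have $B = D_{\mathbf{v}}^{-1} A D_{\mathbf{v}}$, i.e.\ exactly the form~(\ref{eq_Dm1ADa}) with the row sums replaced by the entries of $\mathbf{v}$.

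Next I would use the row-sum conclusion of the theorem. Let $\mathbf{1}$ denote the all-ones column vector; the assertion that every row sum of $B$ equals $\rho(A)$ is precisely $B\mathbf{1} = \rho(A)\mathbf{1}$. Substituting $B = D_{\mathbf{v}}^{-1} A D_{\mathbf{v}}$ and using $D_{\mathbf{v}}\mathbf{1} = \mathbf{v}$ yields $D_{\mathbf{v}}^{-1} A\mathbf{v} = \rho(A)\mathbf{1}$, hence $A\mathbf{v} = \rho(A)\mathbf{v}$. Thus $\mathbf{v}$ is an eigenvector of $A$ for the eigenvalue $\rho(A)$; being strictly positive, it must be a scalar multiple of the Perron vector, since $\rho(A)$ is a simple eigenvalue whose eigenspace is spanned by an essentially unique positive vector.

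Finally, reading $\mathbf{y}$ off of (\ref{eq_mat_X3}) gives $\mathbf{y} = (1, x_2/x_1, x_3/x_1, \ldots, x_n/x_1)^T = (1/x_1)\mathbf{v}$, so $\mathbf{y}$ lies in the one-dimensional subspace spanned by the Perron vector of $A$, which is the claim. Running the same argument on $A^T$, or using the column-sum version of Theorem~\ref{theo_AoX}, would analogously place the vector $\mathbf{x}$ of (\ref{eq_mat_X3}) in the span of the left Perron vector of $A$.

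I do not expect a genuine obstacle: the corollary is essentially a repackaging of the identity $B = D_{\mathbf{v}}^{-1} A D_{\mathbf{v}}$ together with $B\mathbf{1} = \rho(A)\mathbf{1}$. The only points needing care are the bookkeeping that identifies the first row of $X$ (viewed as a column vector) with $\mathbf{v}$ up to the scalar $1/x_1$, and the observation that it is the positivity of $\mathbf{v}$ — guaranteed by Lemma~\ref{lem_zero_row_col} — that forces $\mathbf{v}$ into the Perron direction rather than that of some other eigenvector.
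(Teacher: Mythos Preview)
Your argument is correct and is essentially the same route the paper takes: the paper's one-line proof just says to combine (\ref{eq_Dm1ADa})--(\ref{eq_mat_Dm1AD}) with Lemma~\ref{lem_y_rhoA}, which amounts precisely to your observation that $B=D_{\mathbf v}^{-1}AD_{\mathbf v}$ together with $B\mathbf 1=\rho(A)\mathbf 1$ forces $A\mathbf v=\rho(A)\mathbf v$. You have simply spelled out explicitly what the paper leaves to the reader, including the final identification $\mathbf y=(1/x_1)\mathbf v$.
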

\begin{proof}
Use (\ref{eq_Dm1ADa}), (\ref{eq_mat_Dm1AD}) and Lemma \ref{lem_y_rhoA}.
\end{proof}

\subsection{Convergence of the proposed algorithm}
\begin{theorem}\label{theo_convergence}
The iterative algorithm based on a successive improvement of the Minc bounds 
is convergent for a primitive matrix.
\end{theorem}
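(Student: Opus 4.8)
The plan is to recognise that, although the recursion~(\ref{eq_mat_At}) rescales the current iterate by a fresh diagonal matrix at every step, the \emph{accumulated} diagonal scaling obeys a single one-step linear recurrence governed by $A$, so that the algorithm is in fact the power iteration started from the all-ones vector; convergence of the Minc bounds then drops out of the classical Perron limit for primitive matrices together with Lemma~\ref{lem_y_rhoA}. To make this precise I would set $\mathbf{v}^{(0)}=\mathbf{1}$ and $\mathbf{v}^{(t)}=\mathbf{r}^{(0)}\circ\mathbf{r}^{(1)}\circ\cdots\circ\mathbf{r}^{(t-1)}$, the Hadamard product of all the row-sum vectors used through step $t$. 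Since diagonal matrices commute and the product of diagonal matrices is the diagonal matrix of the entrywise product of their diagonals, (\ref{eq_mat_At}) telescopes to $A^{(t)}=D_{\mathbf{v}^{(t)}}^{-1}A\,D_{\mathbf{v}^{(t)}}$. Multiplying this on the right by $\mathbf{1}$ gives the row-sum vector $\mathbf{r}^{(t)}=D_{\mathbf{v}^{(t)}}^{-1}A\,\mathbf{v}^{(t)}$, that is $r_i^{(t)}=(A\mathbf{v}^{(t)})_i/v_i^{(t)}$; substituting this into $\mathbf{v}^{(t+1)}=\mathbf{v}^{(t)}\circ\mathbf{r}^{(t)}$ cancels every denominator and leaves $v_i^{(t+1)}=(A\mathbf{v}^{(t)})_i$. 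Hence $\mathbf{v}^{(t+1)}=A\mathbf{v}^{(t)}$, and by induction $\mathbf{v}^{(t)}=A^{t}\mathbf{1}$ for every $t\ge0$. The scheme is well defined: $\mathbf{1}>0$, $A$ is nonnegative, and a primitive matrix has no zero row by Lemma~\ref{lem_zero_row_col}, so $A\mathbf{v}>0$ whenever $\mathbf{v}>0$; thus $\mathbf{v}^{(t)}>0$ and each $D_{\mathbf{v}^{(t)}}$ appearing in (\ref{eq_mat_At}) is invertible.

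Next I would invoke the Perron limit for a primitive matrix. Write $\rho=\rho(A)$, let $\mathbf{w}>0$ be the Perron (right) eigenvector and $\mathbf{z}>0$ the left Perron eigenvector, normalised so that $\mathbf{z}^T\mathbf{w}=1$. Because every eigenvalue of a primitive matrix other than $\rho$ is strictly smaller in modulus, $\rho^{-t}A^{t}\to\mathbf{w}\mathbf{z}^T$ as $t\to\infty$. Therefore $\rho^{-t}\mathbf{v}^{(t)}=\rho^{-t}A^{t}\mathbf{1}\to c\,\mathbf{w}$ with $c=\mathbf{z}^T\mathbf{1}$, and $c>0$ because $\mathbf{z}>0$. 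Consequently $v_i^{(t)}/\rho^{t}\to c\,w_i>0$ for each $i$, and so the ratios $v_j^{(t)}/v_i^{(t)}$ converge to $w_j/w_i$.

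From this the conclusion is immediate. The telescoped identity gives $a_{ij}^{(t)}=(v_j^{(t)}/v_i^{(t)})\,a_{ij}\to(w_j/w_i)\,a_{ij}$, so $A^{(t)}\to D_{\mathbf{w}}^{-1}A\,D_{\mathbf{w}}$, which is precisely the matrix $B$ of Theorem~\ref{theo_AoX} and, by Lemma~\ref{lem_y_rhoA}, has all its row sums equal to $\rho$. Equivalently, $r_i^{(t)}=(A^{t+1}\mathbf{1})_i/(A^{t}\mathbf{1})_i\to\rho$ for every $i$, since numerator and denominator are asymptotic to $c\,w_i\,\rho^{t+1}$ and $c\,w_i\,\rho^{t}$ respectively; hence the lower and upper Minc bounds $\min_i r_i^{(t)}$ and $\max_i r_i^{(t)}$ in (\ref{eq_bounds_minc1}) both converge to $\rho(A)$, which is the asserted convergence. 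The convergence is linear with ratio $|\lambda_2|/\rho(A)$, $\lambda_2$ a subdominant eigenvalue of $A$ --- the same asymptotic rate as the power method, as anticipated in the introduction.

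The real work here is the bookkeeping that yields the telescoping identity $A^{(t)}=D_{A^{t}\mathbf{1}}^{-1}A\,D_{A^{t}\mathbf{1}}$ and thereby identifies the iteration with a power iteration; after that the argument is the standard Perron--Frobenius limit combined with Lemma~\ref{lem_y_rhoA}. The one delicate point is that the limiting scaling vector must be strictly positive --- equivalently, that $\mathbf{1}$ is not annihilated by the spectral projection onto $\mathbf{w}$ --- and this is exactly where primitivity is used, through positivity of the left Perron vector $\mathbf{z}$. One should note that iterating (\ref{eq_bounds_minc1}) already shows $\min_i r_i^{(t)}$ to be nondecreasing and $\max_i r_i^{(t)}$ nonincreasing with $\rho(A)$ trapped between them; but that monotone-squeeze argument alone does not force the two limits to coincide, and it is the power-iteration representation that supplies the missing ingredient.
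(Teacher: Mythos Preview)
Your proof is correct and takes a genuinely different route from the paper. The paper argues directly from the monotone squeeze furnished by Minc's inequality: it sets $\zeta^{(t)}=\max_i r_i^{(t)}-\rho(A)$ and $\xi^{(t)}=\rho(A)-\min_i r_i^{(t)}$, writes $\zeta^{(t)}\le c^{(t)}\zeta^{(t-1)}$ with $0<c^{(t)}\le1$, and concludes that $\prod_{i=1}^{t} c^{(i)}\to0$ because the $c^{(i)}$ are ``positive numbers not all equal to~$1$''. You instead telescope the recursion to $A^{(t)}=D_{A^{t}\mathbf{1}}^{-1}A\,D_{A^{t}\mathbf{1}}$, identify the scheme with the power iteration started at $\mathbf{1}$, and invoke the Perron limit $\rho^{-t}A^{t}\to\mathbf{w}\mathbf{z}^{T}$ for primitive matrices. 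The paper's approach aspires to be self-contained and avoid the Perron limit theorem, but the passage from ``not all $c^{(i)}=1$'' to $\prod_i c^{(i)}\to0$ is not justified as written --- precisely the lacuna you flag in your closing paragraph. Your argument is fully rigorous and, as a byproduct, delivers the $|\lambda_2|/\rho(A)$ convergence rate of Corollary~\ref{corollary_2} immediately rather than through a separate spectral computation; the trade-off is that you import the Perron--Frobenius limit as a black box.
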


\begin{proof}
At iteration $t$, the row sum vectors associated with matrices $A^{(t)}$ and
$A^{(t-1)}$ are $\mathbf{r}^{(t)}(A^{(t)})$ and
$\mathbf{r}^{(t-1)}(A^{(t-1)})$, respectively.
From the Minc theorem \cite[page 27]{Minc_1988}, we have:
\begin{eqnarray}
\min_i\left\{r_i^{(t)}(A^{(t)})\right\} &\geq&
       \min_i\left\{r_i^{(t-1)}(A^{(t-1)})\right\} \label{eq_minc1}\\
\max_i\left\{r_i^{(t)}(A^{(t)})\right\} &\leq&
       \max_i\left\{r_i^{(t-1)}(A^{(t-1)})\right\} \label{eq_minc2}
\end{eqnarray}
Let us define two decreasing sequences as follows:
\begin{eqnarray}
\xi^{(t)} &=&\rho(A) - \min_i\left\{r_i^{(t)}(A^{(t)})\right\}
          \label{eq_minc3}\\
\zeta^{(t)} &=& \max_i\left\{r_i^{(t)}(A^{(t)})\right\} - \rho(A)
            \label{eq_minc4}
\end{eqnarray}
From (\ref{eq_minc2}) and (\ref{eq_minc4}), we have
\begin{eqnarray}
\zeta^{(t)} &=& \max_i\left\{r_i^{(t)}(A)\right\} - \rho(A) 
            \leq \max_i\left\{r_i^{(t-1)}(A)\right\} -
            \rho(A)= \zeta^{(t-1)} \label{eq_minc5}\\
\zeta^{(t)} &\leq& c^{(t)} \zeta^{(t-1)} \label{eq_minc6}
\end{eqnarray}
where $0<c^{(t)} \leq 1$. Let $\zeta^{(0)}$ denotes the initial value
obtained using (\ref{eq_minc4}). From relation (\ref{eq_minc6}) we have:
\begin{equation}
\zeta^{(t)} \leq \left(\prod_{i=1}^tc^{(i)}\right) \zeta^{(0)} 
            \label{eq_minc7}
\end{equation}
Since $c^{(i)}$, $i=1,2,\ldots,t$, are positive numbers not all equal 
to $1$, we have
\begin{equation}
\lim_{t\to\infty}\zeta^{(t)} = 0  \label{eq_minc8}
\end{equation}
A similar reasoning using (\ref{eq_minc1}) and (\ref{eq_minc3}) leads to
$\lim_{t\to\infty}\xi^{(t)} = 0$.
Hence, when the number of iteration goes to infinity, relations 
(\ref{eq_minc3}) and (\ref{eq_minc4}) show that the row sums obtained with 
the algorithm converges to the Perron root.
Referring to Lemma \ref{lem_y_rhoA}, it is like a vector with only ones is 
used to obtaining to Perron root.
\end{proof}

Relation (\ref{eq_minc7}) can be used to estimate the minimum number of 
iterations required by the algorithm before convergence when an error 
level $\alpha$ is set. Assuming that $E(c^{(i)})=c$ is the mean of the
$c^{(i)}$ coefficients, relation (\ref{eq_minc7}) becomes
$\zeta^{(t)} = c^t\zeta^{(0)}$ and we have
\begin{equation}
c^t \leq \alpha \Rightarrow t\geq \frac{\log(\alpha)}{\log(c)}
\end{equation} 

\begin{corollary}\label{corollary_2}
For primitive matrices,
the convergence rate of the proposed algorithm is similar to that of the
power method and depends on the magnitude of the second highest eigenvalue.
\end{corollary}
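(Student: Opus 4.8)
The plan is to show that the proposed iteration is, in disguise, the power method applied to the all-ones vector $\mathbf{1}=(1,\ldots,1)^T$, after which the rate follows from the classical analysis of the power method. Write $A^{(t)} = D_{\mathbf{d}^{(t)}}^{-1} A\, D_{\mathbf{d}^{(t)}}$ for the cumulative diagonal similarity produced by (\ref{eq_mat_At}), so that $\mathbf{d}^{(0)}=\mathbf{1}$ and, by (\ref{eq_at_values}), $\mathbf{d}^{(t)} = \mathbf{r}^{(t-1)}\circ\mathbf{d}^{(t-1)}$ componentwise. First I would compute the row sums of $A^{(t)}$ directly from this representation: $r_i^{(t)} = \sum_{j} a_{ij}^{(t)} = \frac{1}{d_i^{(t)}}\sum_{j} a_{ij} d_j^{(t)} = (A\mathbf{d}^{(t)})_i / d_i^{(t)}$. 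Substituting this into the update for $\mathbf{d}^{(t)}$ collapses the telescoping product and yields the clean recursion $d_i^{(t+1)} = r_i^{(t)} d_i^{(t)} = (A\mathbf{d}^{(t)})_i$, i.e. $\mathbf{d}^{(t+1)} = A\mathbf{d}^{(t)}$, hence $\mathbf{d}^{(t)} = A^{t}\mathbf{1}$. Consequently the row sums generated by the algorithm are exactly
\[
r_i^{(t)} = \frac{(A^{t+1}\mathbf{1})_i}{(A^{t}\mathbf{1})_i},
\]
the componentwise Rayleigh-type quotients produced by the power method started at $\mathbf{1}$; these are positive for every $t\ge 1$ because a primitive matrix has no zero row (Lemma \ref{lem_zero_row_col}).

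Next I would invoke the spectral decomposition of $\mathbf{1}$. Order the eigenvalues $\rho(A) = |\lambda_1| > |\lambda_2| \ge |\lambda_3| \ge \cdots$, the strict gap being guaranteed by primitivity. Write $\mathbf{1} = \alpha\mathbf{v} + \mathbf{w}$ with $\mathbf{v}$ the Perron vector, $\mathbf{w}$ in the complementary $A$-invariant subspace, and $\alpha = (\mathbf{u}^T\mathbf{1}) / (\mathbf{u}^T\mathbf{v})$ where $\mathbf{u}$ is the left Perron vector; since $\mathbf{u}>0$, $\mathbf{v}>0$ and $\mathbf{1}>0$ we have $\alpha>0$, so the leading term never drops out. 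Then $A^{t}\mathbf{1} = \alpha\rho(A)^{t}\mathbf{v} + A^{t}\mathbf{w}$ with $\|A^{t}\mathbf{w}\| = O(t^{p}|\lambda_2|^{t})$ for a suitable integer $p$, whence $A^{t}\mathbf{1}/\rho(A)^{t}\to\alpha\mathbf{v}$ and
\[
r_i^{(t)} - \rho(A) = O\!\left(t^{p}\,\bigl(|\lambda_2|/\rho(A)\bigr)^{t}\right).
\]
Taking the minimum and the maximum over $i$ then shows that the sequences $\xi^{(t)}$ and $\zeta^{(t)}$ of Theorem \ref{theo_convergence} decay geometrically with asymptotic ratio $|\lambda_2|/\rho(A)$; equivalently, the averaged contraction factor $c$ appearing after (\ref{eq_minc7}) satisfies $c\to |\lambda_2|/\rho(A)$. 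This is precisely the ratio that governs the power method, which is the assertion to be proved (and since each step is a single matrix--vector product, it also explains the lighter computational load).

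The main obstacle is purely the bookkeeping around the subdominant spectrum: $\lambda_2$ may be complex, may be attained by several eigenvalues of equal modulus, and may lie in a nontrivial Jordan block, which is why the polynomial prefactor $t^{p}$ appears. These refinements do not change the conclusion, because $|\lambda_2| < \rho(A)$ strictly for a primitive matrix, so $t^{p}(|\lambda_2|/\rho(A))^{t}\to 0$ and the \emph{asymptotic} geometric rate is still $|\lambda_2|/\rho(A)$; the only care needed is to phrase the corollary in terms of this asymptotic rate, as is done for the power method itself, rather than as a one-step contraction. A secondary point to make explicit is the non-degeneracy $\alpha>0$, which uses positivity of the left Perron vector and is exactly the fact that, in Corollary \ref{corollary_1}, places the limiting scaling vector in the span of the Perron vector.
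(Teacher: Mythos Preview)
Your argument is correct and, in fact, cleaner than the paper's. The key identity $\mathbf{d}^{(t)}=A^{t}\mathbf{1}$, obtained by collapsing the telescoping product of diagonal scalings, makes the equivalence with the power method exact and immediate; the rate $|\lambda_2|/\rho(A)$ then drops out of the standard analysis, and you are careful about non-diagonalizability via the $t^{p}$ prefactor and about the non-degeneracy $\alpha>0$ via the left Perron vector.

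The paper takes a different route: it expands $A$ (tacitly assumed diagonalizable) as a sum of rank-one spectral pieces $\sum_k \lambda_k\mathbf{u}_k\mathbf{v}_k^{T}$, pushes the Hadamard product $\circ\, X^{(0)}$ through this sum using (\ref{eq_hadamard3}), and then looks at one step of the algorithm applied to $A^{k}$ rather than $k$ steps applied to $A$, reading off the $(\lambda_2/\lambda_1)^{k}$ decay from the resulting expression $(A^{k}\circ X^{(0)})\mathbf{1}$. Conceptually both proofs say the same thing---the iteration is the power method in disguise---but the paper's version leans on the Hadamard rank-one structure of $X$ and on diagonalizability, whereas your cumulative-scaling computation bypasses those hypotheses and yields the sharper statement $r_i^{(t)}=(A^{t+1}\mathbf{1})_i/(A^{t}\mathbf{1})_i$ directly. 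Your approach buys rigor and generality (no diagonalizability needed, Jordan blocks handled); the paper's approach buys a visible link to the Hadamard machinery developed earlier in Section~\ref{sect_methods}.
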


\begin{proof}
The matrix $A$ can be write as the sum of rank one matrices using its
eigenvalues and eigenvectors:
\begin{eqnarray}
A &=&U\Lambda U^{-1} = U\Lambda V^T\label{eq_mat_A_eig}\\
  &=& \lambda_1\mathbf{u}_1\mathbf{v}_1^T
    + \lambda_2\mathbf{u}_2\mathbf{v}_2^T
    + \ldots \lambda_n\mathbf{u}_n\mathbf{v}_n^T\label{eq_mat_A}
\end{eqnarray}
From (\ref{eq_mat_A}), (\ref{eq_mat_X3}) and (\ref{eq_hadamard3}) we have:
\begin{equation}
A\circ X = \lambda_1(\mathbf{u}_1\circ \mathbf{x})
         (\mathbf{v}_1\circ \mathbf{y})^T
    + \lambda_2(\mathbf{u}_2\circ \mathbf{x})(\mathbf{v}_2\circ \mathbf{y})^T
    + \ldots \lambda_n\mathbf{u}_n\circ \mathbf{x})
      (\mathbf{v}_n\circ \mathbf{y})^T\label{eq_mat_AX}
\end{equation}
Using (\ref{eq_mat_A_eig}) and (\ref{eq_mat_A}), an expression for power $k$
of matrix $A$ is
\begin{equation}
A^k = \lambda_1^k\mathbf{u}_1\mathbf{v}_1^T
    + \lambda_2^k\mathbf{u}_2\mathbf{v}_2^T
    + \ldots \lambda_n^k\mathbf{u}_n\mathbf{v}_n^T\label{eq_mat_Ak}
\end{equation}
This expression allows to have another one similar to
(\ref{eq_mat_AX}).
Then, the row sums at the first step of the algorithm using $A^k$ are:
\begin{eqnarray}
(A^{k}\circ X^{(0)})\mathbf{1} &=& \lambda_1^k\delta_1\mathbf{z}_1 +
            \lambda_2^k\delta_2\mathbf{z}_2 + \ldots + 
            \lambda_n^k\delta_n\mathbf{z}_n\\
            &=& \lambda_1^k\delta_1\left(\mathbf{z}_1 +
            \frac{\delta_2}{\delta_1}\left(\frac{\lambda_2}
            {\lambda_1}\right)^k\mathbf{z}_2 + \ldots +
            \frac{\delta_n}{\delta_1}\left(\frac{\lambda_n}
            {\lambda_1}\right)^k\mathbf{z}_n
            \right)\label{eq_mat_Ak2}
\end{eqnarray}
where $\delta_i=(\mathbf{v}_i\circ \mathbf{y}^{(0)})^T\mathbf{1}$,
$\mathbf{z}_i=(\mathbf{u}_i\circ \mathbf{x}^{(0)})$,
$i=1,2,\ldots,n$,

$\mathbf{x}^{(0)} = \left(1, \frac{r_1^{(0)}}{r_2^{(0)}},
                \frac{r_1^{(0)}}{r_3^{(0)}}, \ldots,
                \frac{r_1^{(0)}}{r_n^{(0)}}\right)^T$ and
$\mathbf{y}^{(0)} = \left(1, \frac{r_2^{(0)}}{r_1^{(0)}},
                \frac{r_3^{(0)}}{r_1^{(0)}}, \ldots,
                \frac{r_n^{(0)}}{r_1^{(0)}}\right)^T$

From corollary \ref{corollary_1}, vector
$(A\circ X)\mathbf{1}\in span\left\{(A^k\circ X^{(0)})\mathbf{1}\right\}$
then,
\begin{equation}
dist\left(span(A\circ X)\mathbf{1}, span(\mathbf{z}_1)\right)
=\mathcal{O}\left(\left|\frac{\lambda_2}{\lambda_1}\right|^k\right)
\end{equation}
\end{proof}

\subsection{Algorithm and implementation}
Relations (\ref{eq_at_values}) and (\ref{eq_rit_values}) allow to obtain
an algorithm for computing the matrix $B$ in Theorem \ref{theo_AoX}. A
convergence test is based on the difference between the maximum and the 
minimum values of the row or column sums, i.e. the range value.
\begin{eqnarray}
error &=& \max_{i=1,\ldots,n}\left\{r_i^{(t)}(A^{(t)})\right\} -
        \min_{i=1,\ldots,n}\left\{r_i^{(t)}(A^{(t)})\right\}\label{eq_error}\\
error &=& \max_{j=1,\ldots,n}\left\{c_j^{(t)}(A^{(t)})\right\} -
        \min_{j=1,\ldots,n}\left\{c_j^{(t)}(A^{(t)})\right\}\label{eq_error2}
\end{eqnarray}
Another convergence test can be based on the examination of the minimum
and maximum row sum values, i.e. by using the decreasing sequences
$\xi^{(t)}$ and $\zeta^{(t)}$ defined in Theorem \ref{theo_convergence}.

\subsubsection{Algorithm A (using row sums): Perron root only
\label{algo_1}}
\begin{enumerate}
\item Initialization
      \begin{itemize}
      \item set: $t\leftarrow 0$, $a_{ij}^{(t)} \leftarrow a_{ij}$,
            calculate the row sums using (\ref{eq_rit_values})
      \item set stopping rules: $eps$ (the acceptable error),
            $maxIter$ (the maximum number of iterations), compute
            the initial error value using (\ref{eq_error})
      \end{itemize}
\item while ($error > eps$ and $t < maxIter$)
      \begin{itemize}
      \item update matrix: (\ref{eq_at_values})
      \item calculate row sums: (\ref{eq_rit_values})
      \item compute error: (\ref{eq_error})
      \item increase iteration number: $t \leftarrow t+1$
      \end{itemize}
\end{enumerate}  
\begin{remark}
As mentioned, Theorem \ref{theo_AoX} is also valid
using column sums. In the implementation, one can compute the row and column
sums and perform the next step using the sum where the initial error is the
lowest.
\end{remark}

\begin{remark}
From an iteration to the next, the error should decrease by an amount that
depends on the convergence rate. Otherwise, we must stop the algorithm
because the matrix does not seem to be primitive. This observation can be
used as an indirect test for primitivity of a matrix.
\end{remark}

Indeed, if the spectral radius is not simple (case of an irreducible 
imprimitive matrix) there may be at least two eigenvectors associated with 
eigenvalues having the same modulus.

\begin{remark}
With the proposed algorithm, the diagonal entries of $A$ remain unchanged,
only the off-diagonal components are modified. The Gerschgorin discs
\cite[page 388]{Horn_al_2019} associated with a matrix allow to illustrate
this. 
\end{remark}
Let us consider an example:
\begin{equation}
A = \left(\begin{array}{cc}3&\sqrt{3}\\ \sqrt{3} &1\end{array}\right)
\mbox{ ; } A^{(1)} = \left(\begin{array}{cc}3&1\\ 3 &1\end{array}\right)
\end{equation}
The off-diagonal components
of $A$ are modified in such a way all discs cross the same highest point
on the x-axis ($4$ for this example). To show this, let us write:
\begin{equation}
A = D_A + P\label{eq_DaP}
\end{equation}
where $D_A$ is a diagonal matrix formed with the diagonal
elements of $A$ and $P$ is matrix $A$ where the diagonal elements are set
to zero. From (\ref{eq_AoX}), we have:
\begin{equation}
B = D_A + P\circ X \label{eq_Da_PoX}
\end{equation}
Hence, the row sums of matrix $B$ are given by:
\begin{equation}
r_i(B) = r_i(D_A) + r_i(P\circ X)
       = a_{ii} + \mathbf{p}_{i.}^T\mathbf{x}_{i.}
\end{equation}
where $\mathbf{p}_{i.}$ and $\mathbf{x}_{i.}$ are vectors formed with
row $i$ of matrices $P$ and $X$, respectively.

\begin{remark}
Compared to the power method, there is no initial
vector to set. The results obtained using the power method are the 
Perron root and the associated eigenvalue. Only the
Perron root is obtained using this algorithm.
However, since the row (column) sums
of the matrix $B$ in (\ref{eq_AoX}) are the same, a vector $\mathbf{1}$
formed with only ones is an eigenvector of $B$ ($B^T$).
\begin{equation}
B\mathbf{1} = \rho(A)\mathbf{1} = (A\circ X)\mathbf{1} \label{eq_Bj}
\end{equation}
%where $\lambda$ is the row sums equal to the Perron root.
\end{remark}

\begin{remark}
The proposed algorithm uses another matrix in comparison with the power
method. At each iteration, the total numbers of
multiplications and additions of the matrix-vector multiplication by the
power method are equal to the total number of operations for the proposed
algorithm. Hence, using the power method, the additional arithmetic
operations used for calculating the eigenvector and the eigenvalue are extra
computational load compared to the proposed algorithm.
\end{remark}

\subsubsection{Algorithm B (using row sums): Perron root and vector
\label{algo_2}}
Instead of the algorithm A (\ref{algo_1}), another one can
consist in searching for a vector $\mathbf{y}$ similar to the Perron vector.
For this purpose, the matrix $B$ is initially
equal to $A$ and the vector $\mathbf{y}$ is set to $\mathbf{1}$,
then, the row sums of $B$ are calculated.
At iteration $t$, a vector $\mathbf{y}^{(t)}$
is formed using row sums and the matrix $B$ is updated.
At the convergence, we should have $\mathbf{y}^{(t)}\approx\mathbf{1}$.

\begin{enumerate}
\item Initialization
      \begin{itemize}
      \item set: $t\leftarrow 0$, $B\leftarrow A$,
            $\mathbf{y}\leftarrow\mathbf{1}$ 
            and compute row sums $r_i^{(0)}$ of $B$,
      \item calculate initial error: $\max(r_i^{(0)})-\min(r_i^{(0)})$.
      \item set stopping rules: $eps$ and $maxIter$ (see Algorithm A)
      \end{itemize}
\item while ($error > eps$ and $t < maxIter$)
      \begin{itemize}
      \item form $\mathbf{y}^{(t)}$ using row sums $r_i^{(t)}$
      \item update $\mathbf{y}$: $\mathbf{y}\circ \mathbf{y}^{(t)}$
      \item form $\mathbf{x}$ ($1/\mathbf{y}$) and update matrix
            $B$: $A \circ (\mathbf{x} * \mathbf{y}^T)$
      \item compute error: $\max|\mathbf{y}^{(t)}-\mathbf{1}|$
      \item increase iteration number: $t \leftarrow t+1$
      \item calculate row sums $r_i^{(t)}$ of $B$
      \end{itemize}
\end{enumerate}

The convergence test of this algorithm consists to have only ones for
vector $\mathbf{y}$. Instead, the
convergence test can be based on the examination of the minimum
and maximum row sum values of the decreasing sequences
$\xi^{(t)}$ and $\zeta^{(t)}$ defined in Theorem \ref{theo_convergence}.
The code in appendix \ref{ann_algoC}
is the R implementations of algorithm B using this test. An
input matrix should be square, nonnegative and
each row sum should be greater than zero.

\subsubsection{Application to row-stochastic matrices}
Let us consider Markov chains modeling a dynamic system with finite
discrete $n$ states. At each time $t$, this system is in one state. 
When the system is in state $i$, the move to state $j$ or to stay in 
state $i$ is controlled by the probability $p_{ij}\geq 0$, 
$\sum_{j=1}^np_{ij}=1$. The probabilities are organized in a transition 
matrix $P$ which is nonnegative. Interestingly, the power $k$ of matrix 
$P$ is also a transition matrix which element $p_{ij}$ corresponds to the
probability to move from state $i$ to state $j$ at time $t+k$. The Markov 
chains are used in: (a) biological sequences analysis
\cite{Durbin_al_2002,Ewens_al_2002}, (b) internet traffic or search
engines \cite{Langville_al_2005,Wu_al_2007,Wen_al_2017}, (c) \ldots
The state occupied by the Markov chain process at time $t+k$ depends on 
the nature of transition matrix $P$: reducible/irreducible, 
imprimitive/primitive. In some applications, the observed transition 
matrix is modified to be primitive \cite{Langville_al_2005}.
For this particular case, the power of the modified matrix $\hat{P}$
converges to a matrix formed with the same vector $\mathbf{u}$ verifying: 
$\mathbf{u}^T\hat{P} = \mathbf{u}^T$.
The vector $\mathbf{u}$ is the left-hand vector of the matrix $\hat{P}$.
In the search engines based on the Markov chains, the components of 
the vector $\mathbf{u}$ allow to hierarchize the information.
Applying algorithm B (\ref{algo_2}) to the transposed of the
row-stochastic (modified transition) matrix lead to a vector $\mathbf{y}$
which components are then normalized in a way that their sum is $1$. 
This normalized vector corresponds to $\mathbf{u}$.

\section{Results and conclusions} \label{sect_results_conclusions}
All calculations were performed on the same computer (a laptop equipped 
with i7-66600U processor, 16 GB of RAM, under Microsoft Windows 10) 
and R version 3.6.2. The default error level was arbitrarily set to 1.0E-8.

%\subsection{Results}
The first example is:
\begin{equation}
A = \left(\begin{array}{ccc} 2 &1 &0\\ 0.5 &3 &2\\ 1 &2 &4
    \end{array}\right) \label{eq_A_example2}
\end{equation}
The range values for the row and the column sums are $4$ and $2.5$
respectively. The algorithm is performed using the column sums.
\begin{figure}[!ht]
\centerline{\includegraphics[width=0.5\textwidth]{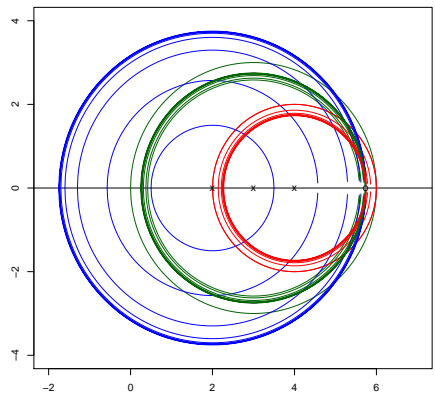}}
\caption{Gerschgorin's discs: thin plot lines for $A$ and $A^{(t)}$ before
convergence, bold plot lines for $A^{(t)}$ at convergence 
\label{fig_gersgorin_discs2}}
\end{figure}
Figure \ref{fig_gersgorin_discs2} presents the Gerschgorin discs for all
iterations. The Perron root for this example is $5.739952$,
the proposed algorithm and the power method
require $17$ and $19$ iterations, respectively. Figure 
\ref{fig_gersgorin_discs2} shows that the major modifications of the
off-diagonal elements of the matrix A are done during the first five
iterations.

For all of the tests performed, the algorithm proposed and the
power method have close number of iterations.
Worse results, in term of the number of iterations, were obtained using a
tridiagonal matrix.
Let $T(n ;c,a,b)$ a tridiagonal matrix of order $n$, where $a$ is the value 
for the diagonal components, $b$ is the value for the upper diagonal 
components and $c$ is the value for the under diagonal components.
An explicit expression relating eigenvalues of matrix
$T$ is available \cite{Noschese_al_2013}:
\begin{equation}
\lambda_k = a+2\sqrt{bc}\cos{\frac{k\pi}{n+1}}
\end{equation}
The ratio $\lambda_2/\lambda_1$ for $T$ is near $1$ when $n$ is high.
For $n=50$, $a=3$, $b=2$ and $c=1$ the first two eigenvalues of matrix
$T$ are: $5.823063$ and $5.806989$.
The proposed algorithm took $5,890$ (algorithm A) or $5,174$ (algorithm B)
iterations to calculate the Perron root.
The power method need $5,159$ iterations.

%\subsection{Conclusions}
Except the cases where the modulus of the second eigenvalue is near to
that of the first, the algorithm proposed converges after few iterations,
especially when the first eigenvalue is largely dominant. The proposed 
method has been succesfully used for a matrix of order $15,515$ that results
from high-throughput biological data.

With a convergence rate similar to that of the classic power method,
the proposed algorithm for computing the Perron root is computationally less
demanding. But, it applies to only primitive matrices. However, it can 
be used as a low cost primitivity test compared to the matrix power
calculations involved in the Frobenius and Wielandt tests.

\subsection*{Acknowledgements}
This work was supported by funds from CNRS, INSERM and University of
Strasbourg.

Author is grateful to a referee for the valuable comments and suggestions.

\bibliographystyle{elsarticle-num}
\bibliography{pfro}

\begin{thebibliography}{10}
\expandafter\ifx\csname url\endcsname\relax
  \def\url#1{\texttt{#1}}\fi
\expandafter\ifx\csname urlprefix\endcsname\relax\def\urlprefix{URL }\fi
\expandafter\ifx\csname href\endcsname\relax
  \def\href#1#2{#2} \def\path#1{#1}\fi

\bibitem{Styan_1973}
G.~P. Styan, Hadamard {P}roducts and {M}ultivariate {S}tatistical {A}nalysis,
  Linear Algebra App 6 (1973) 217--240.
\newblock \href
  {http://dx.doi.org/https://doi.org/10.1016/0024-3795(73)90023-2}
  {\path{doi:https://doi.org/10.1016/0024-3795(73)90023-2}}.

\bibitem{Horn_al_1991}
R.~R. Horn, C.~R. Johnson, Topics in {M}atrix {A}nalysis, Cambridge Univ Press,
  1991.

\bibitem{Perron_1907}
O.~Perron, Zur {T}heorie der {M}atrices, Mathematiche Annalen 64~(2) (1907)
  248--263.

\bibitem{Meyer_2000}
C.~Meyer, Matrix {A}nalysis and {A}pplied {L}inear {A}lgebra, SIAM,
  Philadelphia, 2000.

\bibitem{Brualdi_al_1991}
R.~A. Brualdi, H.~J. Ryser, Combinatorial matrix theory, Vol. 39 of
  Encyclopedia of Mathematics and its Applications, Cambridge Univ. Press,
  Cambridge, 1991.

\bibitem{Berman_al_1994}
A.~Berman, R.~J. Plemmons, Nonnegative Matrices in the Mathematical Sciences,
  SIAM, Philadelphia, 1994.

\bibitem{Frobenius_1912}
F.~G. Frobenius, Ueber matrizen aus nicht negativen elementen, Sitzungsberichte
  der K\"{o}niglich Preussischen Akademie der Wissenschaften" (1912) 456--477.

\bibitem{Horn_al_2019}
R.~R. Horn, C.~R. Johnson, Matrix {A}nalysis, 2nd Edition, Cambridge Univ
  Press, 2019.

\bibitem{Wielandt_1950}
H.~Wielandt, Unzerlegbare, nicht negative matrizen, Mathematische Zeitschrift
  52~(1) (1950) 642--648.

\bibitem{Golub_al_1996}
G.~H. Golub, C.~F.~V. Loan, Matrix computations, 3rd Edition, The Johns Hopkins
  Univ Press, Baltimore, 1996.

\bibitem{Kolotilina_1993}
L.~Y. Kolotilina, Lower {B}ounds for the {P}erron {R}oot of a {N}onnegative
  {M}atrix, Linear Algebra Appl 180 (1993) 133--151.
\newblock \href
  {http://dx.doi.org/https://doi.org/10.1016/0024-3795(93)90528-V}
  {\path{doi:https://doi.org/10.1016/0024-3795(93)90528-V}}.

\bibitem{Liu_1996}
S.-L. Liu, Bounds for the {G}reatest {C}haracteristic {R}oot of a {N}onnegative
  {M}atrix, Linear Algebra Appl 239 (1996) 151--160.
\newblock \href
  {http://dx.doi.org/https://doi.org/10.1016/S0024-3795(96)90008-7}
  {\path{doi:https://doi.org/10.1016/S0024-3795(96)90008-7}}.

\bibitem{Duan_al_2013}
X.~Duan, B.~Zhou, Sharp bounds on the spectral radius of a nonnegative matrix,
  Linear Algebra Appl 439 (2013) 2961--2970.
\newblock \href {http://dx.doi.org/http://dx.doi.org/10.1016/j.laa.2013.08.026}
  {\path{doi:http://dx.doi.org/10.1016/j.laa.2013.08.026}}.

\bibitem{Xing_al_2014}
R.~Xing, B.~Zhou, Sharp bounds on the spectral radius of a nonnegative
  matrices, Linear Algebra Appl 449 (2014) 194--209.
\newblock \href {http://dx.doi.org/http://dx.doi.org/10.1016/j.laa.2014.02.031}
  {\path{doi:http://dx.doi.org/10.1016/j.laa.2014.02.031}}.

\bibitem{Liao_2017}
P.~Liao, Bounds for the {P}erron root of nonnegative matrices and spectral
  radius of iteration matrices, Linear Algebra Appl 530 (2017) 253--265.
\newblock \href {http://dx.doi.org/http://dx.doi.org/10.1016/j.laa.2017.05.021}
  {\path{doi:http://dx.doi.org/10.1016/j.laa.2017.05.021}}.

\bibitem{Elsner_al_1988}
L.~Elsner, C.~Johnson, J.~{Dias da Silva}, The {P}erron {R}oot of a {W}eighted
  {G}eometric {M}ean of {N}onnegative {M}atrices, Linear and Multilinear
  Algebra 24~(1) (1988) 1--13.
\newblock \href {http://dx.doi.org/https://doi.org/10.1080/03081088808817892}
  {\path{doi:https://doi.org/10.1080/03081088808817892}}.

\bibitem{Minc_1988}
H.~Minc, Nonnegative {M}atrices, Wiley, New York, 1988.

\bibitem{Durbin_al_2002}
R.~Durbin, S.~R. Eddy, A.~Krogh, G.~Mitchison, Biological sequences analysis,
  Cambridge Univ Press, Cambridge, 2002.

\bibitem{Ewens_al_2002}
W.~J. Ewens, G.~R. Grant, Statistical methods in bioinformatics: an
  introduction, Springer-Verlag, New-York, 2002.

\bibitem{Langville_al_2005}
A.~N. Langville, C.~D. Meyer, A {S}urvey of {E}igenvector {M}ethods for {W}eb
  {I}nformation {R}etrival, SIAM Review 47~(1) (2005) 135--161.
\newblock \href {http://dx.doi.org/https://doi.org/10.1137/S0036144503424786}
  {\path{doi:https://doi.org/10.1137/S0036144503424786}}.

\bibitem{Wu_al_2007}
G.~Wu, Y.~Wei, A {P}ower-{A}rnoldi algorithm for computing {P}age{R}ank, Numer
  Linear Algebra Appl 14 (2007) 521--546.
\newblock \href {http://dx.doi.org/http://dx.doi.org/10.1002/nla.531}
  {\path{doi:http://dx.doi.org/10.1002/nla.531}}.

\bibitem{Wen_al_2017}
C.~Wen, T.-Z. Huand, Z.-L. Shen, A note on the two-step matrix splitting
  iteration for computing {P}age{R}ank, J Comput Appl Math 315 (2017) 87--97.
\newblock \href {http://dx.doi.org/http://dx.doi.org/10.1016/j.cam.2016.10.020}
  {\path{doi:http://dx.doi.org/10.1016/j.cam.2016.10.020}}.

\bibitem{Noschese_al_2013}
S.~Noschese, L.~Pasquini, L.~Reichel, Tridiagonal {T}oeplitz matrices:
  properties and novel applications, Numer Linear Algebra Appl 20~(2) (2013)
  302--326.
\newblock \href {http://dx.doi.org/https://doi.org/10.1002/nla.1811}
  {\path{doi:https://doi.org/10.1002/nla.1811}}.

\end{thebibliography}

\appendix
\section{R code using row sums\label{ann_A}}
\subsection{Algorithm B\label{ann_algoC}}
\begin{verbatim}
## This function computes iteratively the Perron root
## and the eigenvector of matrix A using row sums
#
#  A: nonnegative square matrix (all row sums are > 0)
#  tol: error level used (stopping criterion)
#  maxIter: maximum number of iterations (stooping criterion)
#
# Returned
#  B: matrix which has the same row sums (B = A o X)
#  pfr: minimum and maximum row sum, that defines to the Perron root
#  y: vector (leading to have the eigenvector and matrix X)
#  iter: number of iterations performed
#  rmin: sequences with minimum row sum values for iterations
#  rmax: sequences with maximum row sum values for iterations
calcPRc <- function(A, tol=1.0e-8, maxIter=50) {
   n <- nrow(A);  m <- ncol(A)
   ri <- apply(A, 1, sum)
   ko <- (sum(A<0) || (min(ri)==0))
   if ((n != m) || (ko)) {
      stop("calcPRc(): for nonnegative primitive matrices")
   }
   y <- rep(1,n)
   iter <- 1; B <- A
   rmin <- c(); erMin <-  rmin[iter] <- min(ri)
   rmax <- c(); erMax <-  rmax[iter] <- max(ri)
   erIter <- ((erMin > tol) || (erMax > tol))
   while (erIter && (iter < maxIter)) {
         yt <- ri/ri[1]; y <- y*yt
         B <- A * ((1/y) %*% t(y))
         ri <- apply(B, 1, sum)
         iter <- iter + 1
         rmin[iter] <- ri.min <- min(ri)
         rmax[iter] <- ri.max <- max(ri)
         erMin <- rmin[iter] - rmin[iter-1]
         erMax <- rmax[iter-1] - rmax[iter]
         erIter <- ((erMin > tol) || (erMax > tol))
   }
   pfr <- c(ri.min, ri.max)
   list(B=B, pfr=pfr, y=y, iter=iter-1, rmin=rmin, rmax=rmax)
}
\end{verbatim}

\end{document}